\title{Endomorphisms of quantum generalized Weyl algebras}
\begin{document}
\author{A.P. Kitchin\thanks{APK thanks EPSRC for its support.}  ~and S. Launois\thanks{SL is grateful for the
financial support of EPSRC first grant \textit{EP/I018549/1}.}}
\date{ }

\maketitle

\theoremstyle{theorem}
\newtheorem{theorem}{Theorem}[section]
\newtheorem{definition}[theorem]{Definition}
\newtheorem{lemma}[theorem]{Lemma}
\newtheorem{example}[theorem]{Example}
\newtheorem{conjecture}[theorem]{Conjecture}
\newtheorem{proposition}[theorem]{Proposition}
\newtheorem{corollary}[theorem]{Corollary}
\newtheorem{remark}[theorem]{Remark}
\begin{abstract}
We prove that every endomorphism of a simple quantum generalized Weyl algebra $A$ over a commutative Laurent polynomial ring in one variable is an automorphism. This is achieved by obtaining an explicit classification of all endomorphisms of $A$. Our main result applies to minimal primitive factors of the quantized enveloping algebra of $U_q(\mathfrak{sl}_2)$ and certain minimal primitive quotients of the positive part of $U_q(\mathfrak{so}_5)$. 
\end{abstract}

\vskip .5cm
\noindent
{\em 2010 Mathematics subject classification:} 16W35, 16S32, 16W20, 17B37

\vskip .5cm
\noindent
{\em Key words:} Generalized Weyl Algebras; Endomorphisms

\section{Introduction.}
At the end of his foundational paper on the Weyl algebra, \cite{Dixmier}, Dixmier posed the now famous conjecture that any algebra endomorphism of the Weyl algebras is an automorphism. Tsuchimoto, \cite{Tsuchimoto}, and Belov-Kanel and Kontsevich, \cite{Belov}, proved independently that the Dixmier Conjecture is stably equivalent to the Jacobian Conjecture of Keller \cite{Keller}. To the best of the authors' knowledge these conjectures remain unproven. The main aim of this paper is to exhibit algebras related to the first Weyl algebra, and which possess the property that every endomorphism is an automorphism.

Let $\mathfrak{n}$ be a finite dimensional complex nilpotent Lie algebra. The structure of the primitive factor algebras of the enveloping algebra $U(\mathfrak{n})$ of $\mathfrak{n}$ are well known: a theorem of Dixmier asserts that these factor algebras are actually isomorphic to Weyl algebras (see \cite{Dixmier2}). Let $\mathbb{K}$ be a field and $q \in \mathbb{K}^*$ not a root of unity. In \cite{Launois},  primitive factor algebras of Gelfand-Kirillov dimension 2 of the positive part $U_q^+(\mathfrak{so}_5)$ of the quantized enveloping algebra $U_q(\mathfrak{so}_5)$ were classified, and can be thought of as quantum analogues of the first Weyl algebra. Among those are the algebras $A_{\alpha,q}$ with $\alpha \neq 0$, where $A_{\alpha,q}$ is the $\mathbb{K}$-algebra generated by $e_1, e_2$ and $e_3$ subject to the commutation relations:
\begin{align*}
& e_1 e_3=q^{-2}e_3 e_1,\\
& e_2 e_3=q^2 e_3 e_2+\alpha,\\
& e_2e_1=q^{-2} e_1 e_2 -q^{-2}e_3,\\
& e_2^2+(q^4-1) e_3 e_1 e_2 +\alpha q^2(q^2+1) e_1=0.
\end{align*}
If $\alpha\neq 0$ then $A_{\alpha,q}$ is simple (see \cite{Launois}). Note that setting $q=1$ and $\alpha=1$ we indeed get an algebra isomorphic to the first Weyl algebra $A_1(\mathbb{K})$. In this paper, we prove the following theorem which can be thought of as an analogue of the Dixmier Conjecture for this class of algebras.
\begin{theorem}\label{aqthe}
Every endomorphism of $A_{\alpha,q}$ is an automorphism when $q$ is not a root of unity and $\alpha$ nonzero.
\end{theorem}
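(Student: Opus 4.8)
The plan is to deduce Theorem~\ref{aqthe} from the general classification result announced in the abstract: once $A_{\alpha,q}$ has been exhibited as a \emph{simple} quantum generalized Weyl algebra over a commutative Laurent polynomial ring $\mathbb{K}[h^{\pm1}]$, the statement is immediate, since every endomorphism of such an algebra is an automorphism. Thus the genuine work is structural. First I would produce an explicit isomorphism between $A_{\alpha,q}$ and a quantum generalized Weyl algebra of the required form, and then I would verify that this algebra is simple exactly when $\alpha\neq0$, recovering the fact already attributed to \cite{Launois}.

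Recall that a degree-one generalized Weyl algebra over $R=\mathbb{K}[h^{\pm1}]$ is assembled from an automorphism $\sigma$ of $R$ and a chosen element $a\in R$ via two generators $x,y$ satisfying $xr=\sigma(r)x$ and $yr=\sigma^{-1}(r)y$ for $r\in R$, together with $yx=a$ and $xy=\sigma(a)$; the quantum variant scales the last two relations by a fixed nonzero parameter, and here $\sigma$ is the $q$-scaling $\sigma(h)=q^{2}h$ (or a similar power). To locate this structure inside $A_{\alpha,q}$, the crucial observation is the quadratic relation $e_2^2+(q^4-1)e_3e_1e_2+\alpha q^2(q^2+1)e_1=0$: it shows that $A_{\alpha,q}$ is generated over the quantum-plane subalgebra $\langle e_1,e_3\rangle$ by the single element $e_2$, which is integral of degree two. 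Rearranging this relation, using the commutation rules to move scalars and monomials past the generators, is what should isolate a distinguished invertible element $h$ and identify $x,y$ as suitable expressions in $e_1,e_2,e_3$. I would take $h$ to be that element, read off $a$, and then check the isomorphism in both directions; to rule out any collapse I would compare PBW-type bases (equivalently Hilbert series, or that both algebras have Gelfand--Kirillov dimension two), so that the natural surjection on generators is forced to be bijective.

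With the presentation in hand, simplicity is the next step. Since $q$ is not a root of unity, $\sigma(h)=q^{2}h$ has infinite order, so the classical simplicity criterion of Bavula reduces to the requirement that $a$ be coprime to each of its nontrivial $\sigma$-shifts $\sigma^{n}(a)$ for $n\geq1$, equivalently that the roots of $a$ lie in pairwise distinct $\sigma$-orbits. I would compute the roots of $a$ explicitly and show that the scalar $\alpha$ controls precisely this coprimality: for $\alpha\neq0$ the relevant roots differ by factors that are not powers of $q^{2}$ and hence occupy distinct orbits, so the criterion holds and $A_{\alpha,q}$ is simple, while for $\alpha=0$ it fails.

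The main obstacle is the first step, namely guessing the correct invertible variable $h$. In contrast to the $U_q(\mathfrak{sl}_2)$ case, where the group-like element $K$ is a ready-made generator of the base ring, $A_{\alpha,q}$ is simple and therefore has trivial centre and no evident normal element, so the Laurent variable must be built by hand out of the quadratic relation. Once the realization and simplicity are secured, Theorem~\ref{aqthe} follows formally: any endomorphism of a simple algebra has kernel a proper two-sided ideal, hence zero, so it is injective, and the explicit classification of all endomorphisms of a simple quantum generalized Weyl algebra over $\mathbb{K}[h^{\pm1}]$ supplies the remaining surjectivity, making every such endomorphism an automorphism.
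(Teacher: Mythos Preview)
Your proposal is correct and follows the same route as the paper: realize $A_{\alpha,q}$ as a simple quantum generalized Weyl algebra over $\mathbb{K}[h^{\pm1}]$, note that simplicity forces any endomorphism to be injective, and then invoke the main classification (Theorem~\ref{aqthe2}) to obtain surjectivity. The only difference is one of economy: the paper does not carry out the structural identification or the simplicity argument but simply cites them, quoting the explicit isomorphism $A_{\alpha,q}\simeq A\bigl(\tfrac{h^{-1}}{1-q^4}-\tfrac{\alpha}{q^2-1},\,q^2\bigr)$ from \cite[Proposition~3.10]{Launois} and simplicity for $\alpha\neq0$ from \cite{Launois}, after which Theorem~\ref{aqthe} is immediate from Theorem~\ref{aqthe2}.
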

\noindent The algebras $A_{\alpha,q}$ ($\alpha \neq 0$) are specific examples of so-called quantum generalized Weyl algebras (over a Laurent polynomial ring in one variable) in the sense of \cite{Bavula1}, and so we will prove a more general result for this class of algebras. More precisely, let $\mathbb{K}$ be a field with $q\in \mathbb{K}^*$ not a root of unity, and let $a(h) \in \mathbb{K}[h^{\pm 1}]$ be a Laurent polynomial. Recall that the quantum generalized Weyl algebra $A(a(h),q)$ is the $\mathbb{K}$-algebra generated by $x$ and $y$ subject to the relations
\begin{equation}\label{qGWArel}
xh=qhx, ~~ yh=q^{-1}hy,~~ xy=a(qh), ~~ yx=a(h), ~~h^{\pm 1}h^{\mp 1}=1.
\end{equation} 
The algebra $A(a(h),q)$ is $\mathbb{Z}$-graded \cite{Bavula2} with $\deg(h)=0$, $\deg(x)=1$ and $\deg(y)=-1$, and it is well known that  
\begin{equation}\label{grading}
A(a(h),q)=\displaystyle\bigoplus_{n\in \mathbb{Z}}A_n, 
\end{equation}
where $A_n=\mathbb{K}[h^{\pm1}]x^n$ if $n\geq 0$ and $A_n=\mathbb{K}[h^{\pm1}]y^{-n}$ if $n<0$.

Examples of  quantum generalized Weyl algebra include the quantum torus in 2 indeterminates, the minimal primitive factors of the quantized enveloping algebra of $U_q(\mathfrak{sl}_2)$ (see \cite[Example 5.3]{Bavula1}) and the algebras $A_{\alpha,q}\simeq A(\frac{h^{-1}}{1-q^4}-\frac{\alpha}{q^2-1},q^2)$ (see \cite[Proposition 3.10]{Launois}). Quantum generalized Weyl algebras are special examples of generalized Weyl algebras which were introduced by Bavula in \cite{Bavula2} and have been widely studied. The problem of when two quantum generalized Weyl algebras are isomorphic was solved by Bavula and Jordan in \cite{Bavula1} for $q$ not a root of unity. The same problem when working over a polynomial ring was addressed by Richard and Solotar in \cite{RichardandSolotar}.

In this paper we classify all endomorphisms of quantum generalized Weyl algebras and obtain the following result which is a consequnce of Section \ref{section2}, Corollaries \ref{endaut+} and \ref{endaut0} and Proposition \ref{Propneg2}.

\begin{theorem}\label{aqthe2}
Every monomorphism of a quantum generalized Weyl algebra is an automorphism. 
\end{theorem}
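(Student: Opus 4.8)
The plan is to leverage the $\mathbb{Z}$-grading (\ref{grading}) of $A=A(a(h),q)$, since this is the structural feature that makes the problem tractable. The key observation is that a monomorphism $\varphi$ is forced to respect the grading in a controlled way: because the center and the torsion-free structure of $A$ are governed by $h$, one expects that $\varphi(h)$ must again be a unit-like or degree-zero element, and that $\varphi$ should map the homogeneous components into a coherent pattern of graded pieces. So first I would analyze the image of the central-type data: describe the units of $A$ (which should be of the form $\lambda h^n$ for $\lambda \in \mathbb{K}^*$) and the normal elements, and use the relations (\ref{qGWArel}) together with the fact that $q$ is not a root of unity to pin down $\varphi(h)$ up to scalar and a power of $h$. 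This is the step that exploits that $q$ is not a root of unity most heavily, since the commutation $xh=qhx$ and $yh=q^{-1}hy$ then force strong rigidity on any element commuting with the relevant data.

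Next I would invoke the structural reduction promised in the excerpt: Theorem \ref{aqthe2} is stated to follow from the classification of endomorphisms carried out in Section \ref{section2}, namely Corollaries \ref{endaut+} and \ref{endaut0} together with Proposition \ref{Propneg2}. Reading the labels, the natural strategy is to split according to the degree of $\varphi(x)$ (or more precisely, the graded behaviour of $\varphi$): the suffixes $+$, $0$ and \emph{neg} suggest a trichotomy into the cases where the induced effect on the grading is positive, zero, or negative. In each regime one obtains an explicit normal form for $\varphi(x)$, $\varphi(y)$ and $\varphi(h)$ forced by the defining relations $xy=a(qh)$, $yx=a(h)$; matching coefficients in the Laurent polynomial $a$ then constrains the admissible maps tightly. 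The proof of Theorem \ref{aqthe2} is then simply the assembly step: once the classification shows that every monomorphism falls into one of these three cases, and each corollary/proposition establishes that the endomorphisms arising in that case which are injective are in fact surjective (hence automorphisms), the theorem is immediate by exhaustion of cases.

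The main obstacle I anticipate is the injectivity-to-surjectivity passage within each case, rather than the grading bookkeeping. Producing a normal form for an arbitrary endomorphism is essentially algebra in the graded pieces, but showing that an injective endomorphism cannot be a \emph{proper} embedding requires ruling out maps of the form $x \mapsto (\text{something that shrinks the algebra})$. Concretely, the danger is an endomorphism that sends $x$ and $y$ to elements whose product reproduces $a(qh)$ only on a proper subring, e.g. via a substitution $h \mapsto h^k$ with $|k|>1$, or via a scaling that degenerates the Laurent polynomial $a$. The heart of the argument must therefore be a non-degeneracy analysis: using that $a(h)$ is a genuine Laurent polynomial (not a monomial, in the interesting simple case) to show that any endomorphism satisfying $\varphi(x)\varphi(y)=a(q\varphi(h))$ and $\varphi(y)\varphi(x)=a(\varphi(h))$ must send $h$ to a scalar multiple of $h^{\pm 1}$, which then makes $\varphi$ invertible. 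I would expect the simplicity hypothesis on $A$ (in the Weyl-algebra-like regime) to be exactly what forecloses the degenerate possibilities, so the crux is to extract that rigidity directly from the shape of $a$ and the non-root-of-unity condition on $q$.
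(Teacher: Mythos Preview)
Your outline matches the paper's proof closely: the trichotomy is indeed governed by the exponent of $h$ in $\varphi(h)=\alpha h^j$ (units are $\mathbb{K}^*h^{\mathbb{Z}}$), and the theorem is assembled exactly from Corollaries \ref{endaut+}, \ref{endaut0} and Proposition \ref{Propneg2}. Two points where your anticipated mechanism diverges from what actually happens are worth flagging.

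First, the step you identify as the crux --- ruling out $h\mapsto \alpha h^j$ with $|j|>1$ --- does \emph{not} come from the shape of $a$ or from simplicity. It comes purely from the $q$-commutation relation $xh=qhx$. Writing $\varphi(x)=\sum_k w_k$ in graded components and applying $\varphi$ to $xh=qhx$ yields $q^{jk}w_k=qw_k$ for every $k$; since $q$ is not a root of unity and $\varphi(x)\neq 0$, this forces $jk=1$ in $\mathbb{Z}$, so $j=\pm 1$. The non-monomiality of $a$ only enters afterwards, to constrain the scalar $\alpha$ (it must be a $t^{\mathrm{th}}$ root of unity). So the ``injectivity-to-surjectivity'' obstacle you worry about in the positive and negative cases never materializes: \emph{every} positive or negative type endomorphism is already an automorphism, without any injectivity hypothesis.

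Second, and relatedly, injectivity is used only in the zero type case $j=0$, where $\varphi(h)=\alpha\in\mathbb{K}^*$. There the relation $xh=qhx$ forces $\varphi(x)=\varphi(y)=0$, so zero type endomorphisms are never monomorphisms (Corollary \ref{endaut0}). Simplicity plays no role in Theorem \ref{aqthe2} itself; it is only invoked afterwards to upgrade ``every monomorphism'' to ``every endomorphism'' (a simple algebra has no nonzero proper ideals, so every nonzero endomorphism is injective).
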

As a consequence, every endomorphism of a simple quantum generalized Weyl algebra is an automorphism, so that Theorem \ref{aqthe}  is just a consequence of Theorem \ref{aqthe2}. Previously, the fact that  every endomorphism of a quantum generalized Weyl algebra $A(a(h),q)$ is an automorphism, \cite{Richard}, has only been established, by Richard, in the case where $a(h)$ is a (Laurent) monomial.  Indeed, in the latter case,  $A(a(h),q)$ is a quantum torus in two variables, and Richard proved that every endomorphism of a simple McConnell-Pettit algebra is an automorphism. Backelin, \cite{Backelin}, proved that quantum Weyl algebras have monomorphisms which are not automorphisms, therefore our result cannot be extended to quantum generalized Weyl algebras over a polynomial ring in one variable. 

Our strategy to classify the endomorphisms of $A(a(h),q)$ is to first study the action of these endomorphisms on the group of units of $A(a(h),q)$, and then to take advantage of the graduation of $A(a(h),q)$.

After this research was completed, we learned about the second version of \cite{Vivas}, where the automorphism groups of quantum generalized Weyl algebras are described. As a consequence of our classification of all endomorphisms, we also retrieve these automorphism groups. Our results show that the description obtained in \cite{Vivas} was incomplete.

\section{Endomorphisms of $A(a(h),q)$: Trichotomy}\label{section2}

Throughout this paper we assume that $q \in \mathbb{K}^*$ is not a root of unity and we fix $a(h)=\displaystyle\sum_{i=e}^d a_i h^i\in\mathbb{K}[h^{\pm1}]$, where  $d$ and $e$ are the degree and valuation of $a(h)$ respectively.  In the following sections,  we will need to know which coefficients of $a(h)$ are zero, so we also introduce the following notation:
\begin{displaymath} 
a(h):=\displaystyle\sum_{j=1}^m a_{i_j} h^{i_j},
\end{displaymath}
where $e=i_1< i_2< \ldots < i_m=d \mbox{ and } a_{i_j} \neq 0 \mbox{ for all }j$.

Finally we set 
\begin{eqnarray}
t:=\gcd(d-i_1,d-i_2,\ldots,d-i_m).
\end{eqnarray}

In the case where $a(h)$ is invertible in $\mathbb{K}[h^{\pm 1}]$, the algebra $A(a(h),q)$ is isomorphic to a quantum torus in two indeterminates. More precisely, when $a(h)$ is invertible in $\mathbb{K}[h^{\pm 1}]$, it is easy to check that 
$A(a(h),q)$ is the $\mathbb{K}$-algebra generated by $h^{\pm1}$ and $x^{\pm 1}$ subject to the relations $h^{\pm 1}h^{\mp 1}=1$, $x^{\pm 1}x^{\mp 1}=1$ and $xh=qhx$. In this case, it was proved by Richard \cite{Richard} that every endomorphism is an automorphism. We henceforth assume that {\bf $a(h)$ is not invertible} in this article. 

One barrier in classifying endomorphisms for the first Weyl algebra $A_1(\mathbb{C})$ is that the group of units is trivial. The algebra $A(a(h),q)$ has a non-trivial group of units. More precisely, it follows from \cite[Lemma 5.1]{Bavula1} that the group of units of $A(a(h),q)$ is equal to the group of units of $\mathbb{K}[h^{\pm1}]$. This greatly restricts the shape of a possible endomorphism of $A(a(h),q)$.

Indeed, let $\psi$ be an algebra endomorphism of $A(a(h),q)$. Since endomorphisms preserve units we have $\psi(h)=\alpha h^i$ where $\alpha\in \mathbb{K}^*$ and $i\in \mathbb{Z}$. The action of $\psi$ on $h$ allows us to study a trichotomy of endomorphisms determined by the exponent of $h$ under $\psi$. More precisely, in the next sections, we will consider three disjoint families of endomorphisms: {\it positive type endomorphisms}, that is, endomorphisms $\psi$ for which $\psi(h)=\alpha h^j$ where $\alpha \in \mathbb{K}^*$ and $j\in \mathbb{Z}_{>0}$; {\it negative type endomorphisms},  that is, endomorphisms $\psi$ for which $\psi(h)=\alpha h^j$ where $\alpha \in \mathbb{K}^*$ and $j\in \mathbb{Z}_{<0}$; and {\it zero type endomorphisms},  that is, endomorphisms $\psi$ for which $\psi(h)=\alpha$, where $\alpha \in \mathbb{K}^*$.

\section{Positive type endomorphisms}\label{secpos}

We keep the notation of the previous section. In particular, $a(h)=\displaystyle\sum_{i=e}^d a_i h^i = \sum_{j=1}^m a_{i_j} h^{i_j}$, where $e=i_1< i_2< \ldots < i_m=d \mbox{ and } a_{i_j} \neq 0 \mbox{ for all }j$, and $t=\gcd(d-i_1,d-i_2,\ldots,d-i_m)$.\\ 

\begin{proposition}\label{propclass1} \
\begin{enumerate}
\item Let $\psi$ be a positive type endomorphism of $A(a(h),q)$. Then $\psi$ acts on the generators of $A(a(h),q)$ as follows:
\begin{equation*}
\psi(h)=\alpha h, ~~\psi(x)=b h^n x, ~~ \psi(y)= y h^{-n} \alpha^d b^{-1},
\end{equation*}
where $\alpha$ is a $t^{\mathrm{th}}$ root of unity, $b\in\mathbb{K}^*$ and $n\in \mathbb{Z}$. \\
\item Conversely if $\alpha$ is a $t^{\mathrm{th}}$ root of unity, $b \in \mathbb{K}^*$ and $n\in \mathbb{Z}$, then there is a unique (positive type) endomorphism $\psi_{(\alpha,n,b)}$ of $A(a(h),q)$ defined on the generators of $A(a(h),q)$ as follows:
\begin{equation*}
\psi_{(\alpha,n,b)}(h)=\alpha h, ~~\psi_{(\alpha,n,b)}(x)=b h^n x, ~~ \psi_{(\alpha,n,b)}(y)= y h^{-n} \alpha^d b^{-1}.
\end{equation*}
\end{enumerate}
\end{proposition}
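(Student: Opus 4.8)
The plan is to treat the two parts separately, the first being the genuine classification and the second a direct verification. For part (1) I would start from the fact recorded just before the statement: since $\psi$ preserves units, $\psi(h)=\alpha h^j$ for some $\alpha\in\mathbb{K}^*$, and since $\psi$ is of positive type, $j\in\mathbb{Z}_{>0}$. The first goal is to pin down $\psi(x)$ and $\psi(y)$ by combining the grading (\ref{grading}) with the commutation relations with $h$.

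Decompose $\psi(x)=\sum_n c_n$ along the grading, with $c_n\in A_n$. A short computation from $xh=qhx$ and $yh=q^{-1}hy$ shows that every homogeneous $c_n\in A_n$ satisfies $c_n h^j=q^{nj}h^j c_n$. Applying $\psi$ to $xh=qhx$ gives $\psi(x)h^j=qh^j\psi(x)$, and comparing graded components (multiplication by the unit $h^j$ preserves the grading) forces $q^{nj}=q$ whenever $c_n\neq 0$. As $q$ is not a root of unity this means $nj=1$; since $j>0$ we get $j=1$ and that $c_1$ is the only possibly nonzero component. Here $\psi(x)\neq 0$, since $\psi(x)=0$ would give $a(q\alpha h)=\psi(xy)=0$. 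Hence $\psi(h)=\alpha h$ and $\psi(x)=f(h)x$ for some nonzero $f(h)\in\mathbb{K}[h^{\pm1}]$; the same argument applied to $yh=q^{-1}hy$ yields $\psi(y)=g(h)y$ for some nonzero $g(h)\in\mathbb{K}[h^{\pm1}]$.

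The heart of the argument is then to exploit $yx=a(h)$. Applying $\psi$ and moving $f(h)$ past $y$ via $y f(h)=f(q^{-1}h)y$ gives
\begin{equation*}
g(h)\,f(q^{-1}h)\,a(h)=a(\alpha h).
\end{equation*}
Set $P(h):=g(h)f(q^{-1}h)$. Since $\mathbb{K}[h^{\pm1}]$ is a domain and $\alpha\neq 0$, the Laurent polynomials $a(h)$ and $a(\alpha h)$ have the same degree $d$ and valuation $e$; comparing degrees and valuations in the displayed identity forces $\deg P=\operatorname{val}P=0$, so $P$ is a nonzero scalar, and comparing leading coefficients gives $P=\alpha^d$. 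Substituting back, $\alpha^d a(h)=a(\alpha h)$ yields $\alpha^{d-i_j}=1$ for every $j$, hence $\alpha^t=1$ with $t=\gcd(d-i_1,\dots,d-i_m)$, i.e. $\alpha$ is a $t^{\mathrm{th}}$ root of unity. Finally $P=g(h)f(q^{-1}h)=\alpha^d$ is a unit of $\mathbb{K}[h^{\pm1}]$, so both factors are units, i.e. monomials; writing $f(h)=bh^n$ one recovers $\psi(x)=bh^nx$ and $g(h)=\alpha^d b^{-1}q^nh^{-n}$, which is exactly $\psi(y)=yh^{-n}\alpha^d b^{-1}$ after moving $y$ across $h^{-n}$.

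For part (2), the stated formulas define $\psi_{(\alpha,n,b)}$ on the generators, and by the universal property of the presentation (\ref{qGWArel}) it suffices to check that these images satisfy each defining relation, uniqueness then being automatic. The relations $\psi(x)\psi(h)=q\psi(h)\psi(x)$ and $\psi(y)\psi(h)=q^{-1}\psi(h)\psi(y)$, together with $\psi(h^{\pm1})\psi(h^{\mp1})=1$, are immediate commutation computations. The two remaining relations reduce, after cancelling $b,b^{-1}$ and commuting $h$-powers, to $\alpha^d a(h)=a(\alpha h)$ and $\alpha^d a(qh)=a(q\alpha h)$; both hold precisely because $\alpha^{d-i_j}=1$ for all $j$, guaranteed by the hypothesis that $\alpha$ is a $t^{\mathrm{th}}$ root of unity. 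The main obstacle is the degree-and-valuation comparison in part (1): it is the single step that forces $P(h)$ to be a scalar—hence $f$ and $g$ to be monomials—and simultaneously extracts the root-of-unity condition $\alpha^t=1$ that ties the classification to the support of $a(h)$.
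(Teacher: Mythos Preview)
Your proof is correct and follows essentially the same route as the paper: the grading together with $xh=qhx$ forces $j=1$ and $\psi(x)\in A_1$, and then a degree/valuation comparison (you use $\psi(yx)=a(\alpha h)$ where the paper uses $\psi(xy)=a(q\alpha h)$, and you write $\psi(y)=g(h)y$ where the paper writes $\psi(y)=yc(h)$) makes the coefficient Laurent polynomials into monomials and yields $\alpha^t=1$. One cosmetic slip: when you argue $\psi(x)\neq 0$ you write $a(q\alpha h)=\psi(xy)$, but at that point $j=1$ has not yet been established, so this should read $a(q\alpha h^j)$---the conclusion $a(q\alpha h^j)\neq 0$ of course holds for any $j$, so the argument is unaffected.
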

\begin{proof}
\begin{enumerate}
\item By assumption $\psi(h)=\alpha h^j$ for $\alpha \in \mathbb{K}^*$ and $j\in \mathbb{Z}_{>0}$. Applying $\psi$ to the relation $xh=qhx$ we get
\begin{equation}\label{eqpq12}
\psi(x)\alpha h^j=q\alpha h^j \psi(x).
\end{equation}
Using the $\mathbb{Z}$-graded structure of $A(a(h),q)$ (see Equation (\ref{grading})), we can write 
$$\psi(x)=\sum_{k\in \mathbb{Z}} w_k,$$
where $w_k\in A_k$ is the component of degree $k$ of $\psi(x)$. Equation (\ref{eqpq12}) can now be rewritten as \begin{equation*}
h^{-j}\displaystyle\sum_{k\in \mathbb{Z}} w_k h^j=q \displaystyle\sum_{k\in \mathbb{Z}} w_k.
\end{equation*}
This implies that $q^{jk} w_k=q w_k$ for all $k\in \mathbb{Z}$. By applying $\psi$ to the relation $yx=a$ and comparing degrees in $h$ we see that $\psi(x)$ is non-zero. Therefore there is at least one $k$ such that $w_k\neq 0$ and since $q$ is not a root of unity and $j>0$, the equality $q^{jk}=q$ implies that $j=1$ and $k=1$ (for all $k$ such that $w_k \neq 0$). 
Hence, we have just proved that 
\begin{equation*}
\psi(h)=\alpha h  \mbox{ and }\psi(x)=b(h) x, 
\end{equation*}
where $b(h) \in \mathbb{K}[h^{\pm 1}]$.
Repeating this argument on the relation $yh=q^{-1}hy$ gives us the following action of $\psi$ on $y$:
\begin{equation*}
 \psi(y)= y c(h),
\end{equation*}
where $c(h) \in \mathbb{K}[h^{\pm 1}]$. By applying $\psi$ to $xy=a(qh)$ and comparing degrees in $h$ we find that $b(h)$ and $c(h)$ have degrees and valuations which sum to zero and thus, are Laurent monomials of opposite degrees, say $b(h)=bh^n$ and $c(h)=ch^{-n}$ with $b,c\in \mathbb{K}^*$ and $n \in \mathbb{Z}$. Thus the equation $\psi(x)\psi(y)=\psi(a(qh))$ becomes $b c a(qh)= a(\alpha q h)$. By substituting $a=\displaystyle\sum_{j=1}^m a_{i_j} h^{i_j}$ we get
\begin{equation}\label{eqcoef}
b c \displaystyle\sum_{j=1}^m a_{i_j} q^{i_j} h^{i_j}=\displaystyle\sum_{j=1}^m \alpha^{i_j} a_{i_j} q^{i_j} h^{i_j}.
\end{equation}
Comparing coefficients in Equation $(\ref{eqcoef})$ we see that
\begin{align}\label{idcoef}
& b c a_{i_j} q^{i_j}= \alpha^{i_j} a_{i_j} q^{i_j}
\end{align}
for all $j\in \{ 1, \dots ,m\}$. Equation $(\ref{idcoef})$ implies that $bc=\alpha^{i_j}$ for all ${i_j}$. Since by assumption $a(h)$ is not a monomial there is at least one ${i_j}\neq d=i_m$, and $\alpha^{d-i_j}=1$ for all ${i_j}\neq d$. This implies $\alpha$ is a $t^\text{th}$ root of unity and $bc=\alpha^d$, as desired.\\
\item It suffices to show that $\psi_{(\alpha,n,b)}$ is consistent on the defining relations of $A(a(h),q)$. Set $c:=\alpha^d b^{-1}$, so that $\psi_{(\alpha,n,b)}(y)=yh^{-n}c$. The first two relations follow by the standard calculations
\begin{equation*}
\psi_{(\alpha,n,b)}(x)\psi_{(\alpha,n,b)}(h)=bh^nx\alpha h=q\alpha h b h^n x=q\psi_{(\alpha,n,b)}(h)\psi_{(\alpha,n,b)}(x)
\end{equation*}
and
\begin{equation*}
\psi_{(\alpha,n,b)}(y)\psi_{(\alpha,n,b)}(h)=yh^{-c}\alpha h=q^{-1} \alpha h y h^{-n} c=q^{-1}\psi_{(\alpha,n,b)}(h)\psi_{(\alpha,n,b)}(y).
\end{equation*}
The remaining two relations follow from the assumptions that $\alpha$ is a $t^{\mathrm{th}}$ root of unity and that $bc=\alpha^d$. We see that

\begin{align*}
\psi_{(\alpha,n,b)}(y)\psi_{(\alpha,n,b)}(x) &=bc a(h) = \alpha^d \displaystyle\sum_{j=1}^m a_{i_j} h^{i_j}\\
&=\alpha^d\displaystyle\sum_{j=1}^m \alpha^{i_j-d} a_{i_j} h^{i_j}=a(\alpha h)=\psi_{(\alpha,n,b)}(a)
\end{align*}

and
\begin{align*}
\psi_{(\alpha,n,b)}(x)\psi_{(\alpha,n,b)}(y)&=bc a(qh)=\alpha^d\displaystyle\sum_{j=1}^m q^{i_j} a_{i_j} h^{i_j}\\
&=\alpha^d\displaystyle\sum_{j=1}^m \alpha^{i_j-d} q^{i_j} a_{i_j} h^{i_j}=a( \alpha qh)=\psi_{(\alpha,n,b)}(a(qh)).
\end{align*}
Thus, $\psi_{(\alpha,n,b)}$ is well-defined and is an endomorphism of $A(a(h),q)$.
\end{enumerate}
\end{proof}
\noindent
We denote by $\mathrm{U}_t\subseteq \mathbb{K}^*$ the group of $t^{\mathrm{th}}$ roots of unity. Proposition \ref{propclass1} shows that there is a bijection between the set 
\begin{displaymath}
I:=\{(\alpha,n,b)| \alpha \in U_{t}, n\in \mathbb{Z}, b\in \mathbb{K}^*\}
\end{displaymath} 
and the set $\mathrm{End}_+$ of positive type endomorphisms. This bijection sends $(\alpha,n,b)\in I$ to $\psi_{(\alpha,n,b)}$. Note that the composition of two endomorphisms of positive type is again an endomorphism of positive type. More precisely, it is easy to check that 
\begin{equation}\label{composition}
\psi_{(\alpha_1,n_1,b_1)} \circ \psi_{(\alpha_2,n_2,b_2)}=\psi_{(\alpha_1\alpha_2,n_1+n_2,b_1 b_2 \alpha_1^{n_2})}.
\end{equation}
Observing that $(1,0,1)\in I$ and $\psi_{(1,0,1)}=\mathrm{id}$, we obtain that every positive endomorphism is an automorphism. We have just proved the following result. 
\begin{corollary}\label{endaut+}
Any endomorphism $\psi_{(\alpha,n,b)}$ of positive type is an automorphism with 
$$\psi_{(\alpha,n,b)}^{-1}=\psi_{(\alpha^{-1},-n,b^{-1}\alpha^n)}.$$
\end{corollary}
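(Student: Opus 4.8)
The plan is to exploit the explicit description of positive type endomorphisms furnished by Proposition~\ref{propclass1} together with the composition rule in Equation~(\ref{composition}), which shows that $\mathrm{End}_+$ is closed under composition and that its group law in the parameters $(\alpha,n,b)$ is completely explicit. Since $\psi_{(1,0,1)}=\mathrm{id}$, it suffices to produce, for a given triple $(\alpha,n,b)\in I$, a second triple whose composite with $(\alpha,n,b)$ returns $(1,0,1)$ in both orders.

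First I would verify that the proposed inverse parameters $(\alpha^{-1},-n,b^{-1}\alpha^n)$ genuinely lie in the index set $I$. This is immediate: $U_t$ is a group, so $\alpha\in U_t$ gives $\alpha^{-1}\in U_t$; clearly $-n\in\mathbb{Z}$; and $b^{-1}\alpha^n\in\mathbb{K}^*$. Hence $\psi_{(\alpha^{-1},-n,b^{-1}\alpha^n)}$ is a well-defined positive type endomorphism by part (2) of Proposition~\ref{propclass1}, so it is at least a candidate for the inverse.

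Next I would compute both composites directly from Equation~(\ref{composition}). Taking $(\alpha_1,n_1,b_1)=(\alpha,n,b)$ and $(\alpha_2,n_2,b_2)=(\alpha^{-1},-n,b^{-1}\alpha^n)$, the three output coordinates are $\alpha\alpha^{-1}=1$, $n+(-n)=0$, and $b\cdot b^{-1}\alpha^n\cdot\alpha^{-n}=1$, so the composite is $\psi_{(1,0,1)}=\mathrm{id}$. Reversing the order and applying the same formula produces output coordinates $1$, $0$, and $b^{-1}\alpha^n\cdot b\cdot(\alpha^{-1})^n=1$, again the identity. Since $\psi_{(\alpha,n,b)}$ thus admits a two-sided inverse inside $\mathrm{End}_+$, it is an automorphism with the stated inverse.

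The argument presents no real obstacle: the substantive work was already carried out in Proposition~\ref{propclass1}, which classified the positive type endomorphisms and pinned down their composition law, so all that remains is to observe that this law endows $\mathrm{End}_+$ with a group structure and to read off the inverse. The only point requiring a little care is the bookkeeping of the factor $\alpha_1^{n_2}$ in the third coordinate of Equation~(\ref{composition}); it is precisely this twist that forces the $\alpha^n$ correction in the inverse's third parameter, and keeping track of this exponent (and its sign) in both orderings is the sole place where an error could realistically slip in.
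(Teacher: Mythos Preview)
Your argument is correct and follows the same approach as the paper: use the composition formula~(\ref{composition}) together with the observation that $\psi_{(1,0,1)}=\mathrm{id}$ to exhibit an explicit two-sided inverse. Your write-up is in fact slightly more thorough than the paper's, since you verify both orders of composition and explicitly check that $(\alpha^{-1},-n,b^{-1}\alpha^n)\in I$.
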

This result together with (\ref{composition}) shows that the set $\mathrm{End}_+$ is a subgroup of $\mathrm{Aut}(A)$. Moreover, endomorphisms of the form $\psi_{(1,n,1)}$, with $n \in \mathbb{Z}$, form a subgroup of $\mathrm{End}_+$ which is isomorphic to $\mathbb{Z}$; and endomorphisms of the form $\psi_{(\alpha,0,b)}$, with $(\alpha,b) \in \mathrm{U}_t \times \mathbb{K}^*$, form a normal  subgroup of $\mathrm{End}_+$ which is isomorphic to $\mathrm{U}_t \times \mathbb{K}^*$. This allows us to describe the group structure of $\mathrm{End}_+$ in the following summarizing proposition.
\begin{proposition}
The set of positive type endomorphisms of $A(a(h),q)$ is a subgroup of $\mathrm{Aut}(A)$, denoted $\mathrm{End}_+$. Moreover, there is a right split short exact sequence of groups
\begin{equation*}
1\longrightarrow\mathrm{U}_t \times \mathbb{K}^*\longrightarrow \mathrm{End}_+ \longrightarrow \mathbb{Z} \longrightarrow 1.
\end{equation*}
Both $\mathrm{U}_t \times \mathbb{K}^*$ and $\mathbb{Z}$ can be realized as subgroups of $\mathrm{End}_+$. Thus 
\begin{displaymath}
\mathrm{End}_+\cong(\mathrm{U}_t \times \mathbb{K}^*) \rtimes\mathbb{Z}.
\end{displaymath}
\end{proposition}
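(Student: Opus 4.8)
The plan is to reduce the entire statement to elementary group theory on the parameter set $I$, using only the explicit parametrization of Proposition \ref{propclass1} and the composition rule (\ref{composition}). First I would dispatch the subgroup claim, which is already implicit in the text preceding the proposition: (\ref{composition}) shows $\mathrm{End}_+$ is closed under composition, the identity element is $\psi_{(1,0,1)}$, and Corollary \ref{endaut+} provides an inverse $\psi_{(\alpha^{-1},-n,b^{-1}\alpha^n)}$ that again lies in $\mathrm{End}_+$. Hence $\mathrm{End}_+$ is a subgroup of $\mathrm{Aut}(A)$.

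Next I would construct the short exact sequence. Define the \emph{degree map} $\pi:\mathrm{End}_+\to\mathbb{Z}$ by $\pi(\psi_{(\alpha,n,b)})=n$. Reading off the middle coordinate of (\ref{composition}) shows $\pi$ is a group homomorphism, and it is surjective since $\pi(\psi_{(1,n,1)})=n$. Its kernel is precisely $\{\psi_{(\alpha,0,b)}\mid \alpha\in\mathrm{U}_t,\ b\in\mathbb{K}^*\}$; specializing (\ref{composition}) to $n_1=n_2=0$ shows that $\iota:\mathrm{U}_t\times\mathbb{K}^*\to\mathrm{End}_+$, $(\alpha,b)\mapsto\psi_{(\alpha,0,b)}$, is an injective homomorphism with image $\ker\pi$. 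Being a kernel, this subgroup is automatically normal, which yields the asserted exact sequence.

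To exhibit the right splitting I would take the section $s:\mathbb{Z}\to\mathrm{End}_+$, $n\mapsto\psi_{(1,n,1)}$. Formula (\ref{composition}) gives $s(n_1)\circ s(n_2)=s(n_1+n_2)$, so $s$ is a group homomorphism, and clearly $\pi\circ s=\mathrm{id}_{\mathbb{Z}}$. The splitting lemma for group extensions then delivers the decomposition $\mathrm{End}_+\cong(\mathrm{U}_t\times\mathbb{K}^*)\rtimes\mathbb{Z}$. To pin down the semidirect product precisely I would compute the conjugation action of $s(n)$ on $\iota(\alpha,b)$: using (\ref{composition}) together with $\psi_{(1,n,1)}^{-1}=\psi_{(1,-n,1)}$ from Corollary \ref{endaut+}, one obtains $s(n)\,\iota(\alpha,b)\,s(n)^{-1}=\iota(\alpha,\alpha^{-n}b)$, so $\mathbb{Z}$ acts on $\mathrm{U}_t\times\mathbb{K}^*$ by $(\alpha,b)\mapsto(\alpha,\alpha^{-n}b)$; this is a well-defined action because $\alpha^{-n}\in\mathrm{U}_t\subseteq\mathbb{K}^*$.

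I do not expect a genuine obstacle here: once (\ref{composition}) and Corollary \ref{endaut+} are available, the proposition is pure bookkeeping. The one point that genuinely requires care is verifying that $s$ is a \emph{group} homomorphism rather than a mere set-theoretic section, since it is this stronger property that upgrades the split exact sequence to an actual semidirect-product decomposition; verifying that $\iota$ maps onto $\ker\pi$ is similarly routine but should be recorded explicitly. It is also worth noting that the product is genuinely twisted when $t>1$ and collapses to the direct product $\mathbb{K}^*\times\mathbb{Z}$ when $t=1$, consistent with the conjugation action computed above.
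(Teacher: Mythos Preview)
Your proposal is correct and follows essentially the same approach as the paper, which presents the argument in the paragraph immediately preceding the proposition rather than as a formal proof: both identify $\{\psi_{(\alpha,0,b)}\}$ with $\mathrm{U}_t\times\mathbb{K}^*$ as a normal subgroup and $\{\psi_{(1,n,1)}\}$ with $\mathbb{Z}$ as a complementary subgroup, all via the composition rule~(\ref{composition}) and Corollary~\ref{endaut+}. Your version is more explicit---in particular you compute the conjugation action $(\alpha,b)\mapsto(\alpha,\alpha^{-n}b)$, which the paper leaves unstated---but there is no difference in strategy.
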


\section{Zero type endomorphisms}
\begin{proposition}\label{propex}
\begin{enumerate}
\item Let $\psi$ be an endomorphism of zero type, so that $\psi(h)=\alpha$ with $\alpha \in \mathbb{K}^*$. Then $\alpha$ and $q \alpha$ are roots of $a(h)$, and $\psi(x)=0=\psi(y)$.
\item Conversely, if $\alpha$ and $q \alpha$ are roots of $a(h)$, then there exists a unique (zero type) endomorphism $\psi_{\alpha}$ of $A(a(h),q)$ such that 
$$\psi_{\alpha}(h)=\alpha \mbox{ and } \psi_{\alpha}(x)=0=\psi_{\alpha}(y).$$
\end{enumerate}
\end{proposition}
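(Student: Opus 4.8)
The plan is to prove both directions by direct verification against the defining relations \eqref{qGWArel}, in the same spirit as the positive type case in Proposition \ref{propclass1}. For part (1), I would first apply $\psi$ to the commutation relation $xh=qhx$. Since $\psi(h)=\alpha$ is a central scalar, this gives $\alpha\,\psi(x)=q\alpha\,\psi(x)$, that is $\alpha(1-q)\psi(x)=0$. As $\alpha\neq 0$ and $q\neq 1$ (recall $q$ is not a root of unity), this forces $\psi(x)=0$. Applying $\psi$ to $yh=q^{-1}hy$ in exactly the same way yields $\alpha(1-q^{-1})\psi(y)=0$, whence $\psi(y)=0$. Having shown that $\psi$ annihilates both $x$ and $y$, I would then apply $\psi$ to the two product relations: from $yx=a(h)$ we get $0=\psi(y)\psi(x)=a(\alpha)$, and from $xy=a(qh)$ we get $0=\psi(x)\psi(y)=a(q\alpha)$. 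Thus $\alpha$ and $q\alpha$ are both roots of $a(h)$, as claimed.

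For part (2), uniqueness is immediate, since $h$, $x$ and $y$ generate $A(a(h),q)$ and an algebra endomorphism is determined by its values on generators. For existence I would invoke the presentation of $A(a(h),q)$ and check that the assignment $h\mapsto\alpha$, $x\mapsto 0$, $y\mapsto 0$ (together with $h^{-1}\mapsto\alpha^{-1}$, which is legitimate because $\alpha\neq 0$) respects each defining relation. The commutation relations $xh=qhx$, $yh=q^{-1}hy$ and the unit relations $h^{\pm 1}h^{\mp 1}=1$ hold trivially, since every term involving the image of $x$ or $y$ vanishes and $\alpha\alpha^{-1}=1$. The only relations carrying genuine content are $xy=a(qh)$ and $yx=a(h)$: under the assignment their left-hand sides become $0$, while their right-hand sides become $a(q\alpha)$ and $a(\alpha)$, and these vanish precisely by the hypothesis that $q\alpha$ and $\alpha$ are roots of $a(h)$. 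Hence the assignment extends to a unique endomorphism $\psi_\alpha$.

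I do not expect any real obstacle here; the argument is entirely a matter of substituting the scalar $\alpha$ into the defining relations. The only point deserving a moment's care is the implication in part (1) that $\psi(x)=\psi(y)=0$: because $\psi(h)$ is now a scalar rather than a monomial $\alpha h^{j}$ as in the positive and negative type cases, the graded/degree argument used there collapses, and instead the commutation relations alone force the images of $x$ and $y$ to vanish outright.
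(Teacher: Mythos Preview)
Your proof is correct and follows essentially the same approach as the paper's own proof: in both, part (1) is obtained by applying $\psi$ to $xh=qhx$ and $yh=q^{-1}hy$ to force $\psi(x)=\psi(y)=0$, then reading off $a(\alpha)=a(q\alpha)=0$ from the product relations, and part (2) is a direct verification of the defining relations. Your write-up is in fact slightly more explicit (noting $h^{-1}\mapsto\alpha^{-1}$ and the uniqueness argument), but there is no substantive difference in strategy.
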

\begin{proof}
\begin{enumerate}
\item Assume first that  $\psi$ is an endomorphism of $A(a(h),q)$. Applying $\psi$ to the relation $xh=qhx$ we get that $\alpha(1-q) \psi(x) =0$. As $\alpha\neq 0$ and $q$ is not a root of unity this implies $\psi(x)=0$. Similarly we find that $\psi(y)=0$. Applying $\psi$ to the relations $yx=a$ and $xy=a(qh)$ we see $\alpha$ and $q\alpha$ are roots of $a(h)$.\\
\item Conversely suppose that $\alpha$ and $q\alpha$ are roots of $a(h)$. It suffices to check that $\psi_{\alpha}$ is consistent on the defining relations of $A(a(h),q)$. We see by standard calculation that
\begin{align*}
&\psi_{\alpha}(x)\psi_{\alpha}(h)=0=q\psi_{\alpha}(h)\psi_{\alpha}(x),\\
&\psi_{\alpha}(y)\psi_{\alpha}(h)=0=q^{-1}\psi_{\alpha}(h)\psi_{\alpha}(y).
\end{align*}
Since $\alpha$ and $q \alpha$ are roots of $a(h)$ we get
\begin{equation*}
\psi_{\alpha}(x)\psi_{\alpha}(y)=0=\psi_{\alpha}(a)
\end{equation*}
and
\begin{equation*}
\psi_{\alpha}(y)\psi_{\alpha}(x)=0=\psi_{\alpha}(a(qh)),
\end{equation*}
as desired.
\end{enumerate}
\end{proof}
\noindent In the case where $\alpha$ and $q \alpha$ are both roots of $a(h)$, the algebra  $A(a(h),q)$ is not simple by \cite[Proposition 3.1.2]{RichardandSolotar}. Thus, the following result follows immediately from Proposition \ref{propex}.

\begin{corollary}\label{endaut0}
\begin{enumerate}
\item $A(a(h),q)$ has no zero type monomorphisms. 
\item If $A(a(h),q)$ is simple, then it has no zero type endomorphisms.
\end{enumerate}
\end{corollary}

\section{Negative type endomorphisms}\label{section:negative}

Recall that an endomorphism $\psi$ of $A(a(h),q)$ has negative type if there exist $\alpha \in \mathbb{K}^*$ and a positive integer $j$ such that $\psi(h)=\alpha h^{-j}$. 

\begin{proposition}\label{Propneg2}
Every endomorphism of negative type is an automorphism.
\end{proposition}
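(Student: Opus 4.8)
The plan is to mirror the analysis carried out for positive type endomorphisms in Proposition~\ref{propclass1}, but with the crucial difference that the exponent $-j$ is now negative. Let $\psi$ be a negative type endomorphism, so that $\psi(h)=\alpha h^{-j}$ with $\alpha\in\mathbb{K}^*$ and $j>0$. Applying $\psi$ to the relation $xh=qhx$ and writing $\psi(x)=\sum_{k\in\mathbb{Z}} w_k$ in terms of the $\mathbb{Z}$-grading (\ref{grading}), the same commutation computation yields $q^{-jk}w_k=q\,w_k$ for every $k$. As in the proof of Proposition~\ref{propclass1}, applying $\psi$ to $yx=a$ shows $\psi(x)\neq 0$, so there is at least one $k$ with $w_k\neq 0$; since $q$ is not a root of unity the equality $q^{-jk}=q$ forces $-jk=1$, whence $j=1$ and $k=-1$. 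Thus $\psi(h)=\alpha h^{-1}$ and $\psi(x)$ lies in the degree $-1$ component, i.e. $\psi(x)=y\,c(h)$ for some $c(h)\in\mathbb{K}[h^{\pm1}]$; running the identical argument on $yh=q^{-1}hy$ gives $\psi(y)=b(h)\,x$ with $b(h)\in\mathbb{K}[h^{\pm1}]$.

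The second step is to pin down the Laurent polynomials $b(h)$ and $c(h)$. Applying $\psi$ to $xy=a(qh)$ and $yx=a(h)$ and comparing degrees and valuations in $h$ forces $b(h)$ and $c(h)$ to be monomials of opposite degree, say $b(h)=bh^n$ and $c(h)=ch^{-n}$ with $b,c\in\mathbb{K}^*$ and $n\in\mathbb{Z}$; substituting back into, say, $\psi(y)\psi(x)=\psi(a(h))$ produces an equation of the form $bc\,a(qh)=a(\alpha h^{-1})$ (and similarly from the other relation). Here the asymmetry introduced by $h\mapsto\alpha h^{-1}$ is essential: comparing the two sides forces a precise combinatorial constraint linking the exponents $i_j$ appearing in $a(h)$ to their reflections $-i_j$. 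I expect this to reveal that $\psi$ restricted to the subalgebra generated by $h,x,y$ is, up to a scalar twist, the composite of an order-two ``flip'' automorphism $(h,x,y)\mapsto(\alpha h^{-1}, y h^n c, b h^{-n} x)$ with a positive type automorphism of the kind already classified in Section~\ref{secpos}; one then reads off the inverse explicitly and checks it is again a (negative type) endomorphism by verifying the defining relations~(\ref{qGWArel}), exactly as in parts~(2) of the preceding propositions.

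The main obstacle is the degree/valuation comparison in the second step, which is more delicate than in the positive case. When $\psi(h)=\alpha h$ the map $a(qh)\mapsto a(\alpha q h)$ preserves the set of exponents $\{i_j\}$, so matching coefficients is immediate; but $h\mapsto\alpha h^{-1}$ sends the exponent $i_j$ to $-i_j$, so the equation $bc\,a(qh)=a(\alpha qh^{-1})$ can hold only if the support $\{i_1<\dots<i_m\}$ of $a$ is symmetric under $i\mapsto d+e-i$ (i.e. the reflection about $(d+e)/2$), together with a matching condition on the coefficients $a_{i_j}$. The real content is therefore to show that a negative type endomorphism \emph{can only exist} when $a(h)$ enjoys this palindromic symmetry, and that whenever it does exist the resulting map is bijective. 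Since the flip is visibly an involution on the level of exponents and the remaining twist is a positive type automorphism (which is invertible by Corollary~\ref{endaut+}), bijectivity follows once the structural form of $\psi$ is established; constructing the explicit two-sided inverse and confirming it respects all four relations in~(\ref{qGWArel}) is then routine.
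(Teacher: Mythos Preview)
Your approach is workable but takes a far longer road than the paper does, and it contains a computational slip that would derail the argument if left uncorrected.

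The paper's proof of this proposition is three lines: if $\psi(h)=\alpha h^{-j}$ with $j>0$, then $\psi^2(h)=\alpha^{1-j}h^{j^2}$, so $\psi^2$ is a \emph{positive} type endomorphism and hence an automorphism by Corollary~\ref{endaut+}; bijectivity of $\psi^2$ immediately gives bijectivity of $\psi$. No classification of $\psi$ is needed at all. What you are proposing is essentially the content of the \emph{next} proposition in the paper (the explicit description of negative type endomorphisms), which the paper proves only after the automorphism statement is already settled---and even there the paper uses the $\psi^2$ trick to conclude that $b(h)$ and $c(h)$ are monomials, rather than a direct degree/valuation argument.

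On the slip: your claim that $b(h)$ and $c(h)$ are monomials of \emph{opposite} degree is incorrect in general. From $\psi(y)\psi(x)=b(h)\,a(qh)\,c(h)=a(\alpha h^{-1})$ one gets $\deg b+\deg c+d=-e$ and $\operatorname{val} b+\operatorname{val} c+e=-d$, so both are monomials but with $\deg b+\deg c=-(d+e)$, not $0$. Writing $b(h)=bh^{u}$, $c(h)=ch^{v}$, the correct relation is $bc\,h^{u+v}a(qh)=a(\alpha h^{-1})$ (the paper's Equation~(\ref{relationa})). With your ``opposite degree'' ansatz the factor $h^{u+v}$ disappears, and the resulting equation $bc\,a(qh)=a(\alpha h^{-1})$ would force $d+e=0$, falsely excluding algebras such as $A_{\alpha,q}$ which do admit negative type endomorphisms with $d+e$ odd. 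Once you fix this, your plan of exhibiting an explicit inverse goes through, but it is considerably more work than simply squaring $\psi$.
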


\begin{proof}
Let $\psi$ be a negative type endomorphism with $\psi(h)=\alpha h^{-j}$, $\alpha \in \mathbb{K}^*$ and $j\in \mathbb{Z}_{>0}$. We see that $\psi^2(h)=\alpha^{1-j}h^{j^2}$, so that $\psi^2$ is a positive type endomorphism and is therefore an automorphism. We conclude that $\psi$ is an automorphism.
\end{proof}
\begin{remark}
Proposition \ref{Propneg2} completes the proof of Theorem $\ref{aqthe2}$.
\end{remark}

We are now interested in describing negative type endomorphisms. While the existence of positive type endomorphisms did not depend on the Laurent polynomial $a(h)$, the existence of negative type endomorphisms is more subtle and depends both on $a(h)$ and $q$.

\begin{proposition}\label{propneg}
\begin{enumerate}
\item Let $\psi$ is a negative type endomorphism of $A(a(h),q)$. Then $\psi$ acts on the generators of $A(a(h),q)$ as follows:
\begin{equation*}
\psi(h)=\alpha h^{-1},~~\psi(x)= y ch^v, ~~\psi(y)=bh^ux
\end{equation*}
where $\alpha,b,c \in \mathbb{K}^*$ and $u, v\in \mathbb{Z}$ satisfy the relation
\begin{equation}\label{relationa}
bch^{u+v} a(qh) =a(\alpha h^{-1}).
\end{equation}
\item Conversely if  $\alpha,b,c \in \mathbb{K}^*$ and $u, v\in \mathbb{Z}$ satisfy Equation (\ref{relationa}), 
then there exists a unique (negative type) endomorphism $\psi_{(\alpha,b,c,u,v)}$ of $A(a(h),q)$ defined by
\begin{equation*}
\psi_{(\alpha,b,c,u,v)}(h)=\alpha h^{-1},~~\psi_{(\alpha,b,c,u,v)}(x)= y ch^v, ~~\psi_{(\alpha,b,c,u,v)}(y)=bh^ux.
\end{equation*}
\end{enumerate}
\end{proposition}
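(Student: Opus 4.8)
The plan is to mirror the argument for positive type endomorphisms (Proposition \ref{propclass1}), using the $\mathbb{Z}$-grading (\ref{grading}) together with the hypothesis that $q$ is not a root of unity. First I would pin down the shape of $\psi$. Writing $\psi(h)=\alpha h^{-j}$ with $j>0$ and decomposing $\psi(x)=\sum_{k\in\mathbb{Z}}w_k$ into its homogeneous components $w_k\in A_k$, the commutation identity $w_k h^{-j}=q^{-jk}h^{-j}w_k$ (valid for $k\geq 0$ and $k<0$ alike) turns the image of the relation $xh=qhx$ into $\sum_k q^{-jk}w_k=q\sum_k w_k$. Hence every nonzero component satisfies $q^{-jk}=q$, and since $q$ is not a root of unity this forces $j=1$ and $k=-1$. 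Thus $\psi(h)=\alpha h^{-1}$ and $\psi(x)\in A_{-1}=\mathbb{K}[h^{\pm1}]y$, say $\psi(x)=yc(h)$; the identical argument applied to $yh=q^{-1}hy$ gives $\psi(y)\in A_{1}=\mathbb{K}[h^{\pm1}]x$, say $\psi(y)=b(h)x$.

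Next I would extract the defining relation and force $b(h)$ and $c(h)$ to be monomials. Applying $\psi$ to $yx=a(h)$ and using $xy=a(qh)$ to collapse the middle, one gets $\psi(y)\psi(x)=b(h)c(h)a(qh)$, which must equal $\psi(a(h))=a(\alpha h^{-1})$. Since $a(h)\neq 0$ the right-hand side is nonzero, so $b(h)$ and $c(h)$ are nonzero. Because degree and valuation are additive over the domain $\mathbb{K}[h^{\pm1}]$, comparing the widths (degree minus valuation) of the two sides yields $\mathrm{width}(b(h))+\mathrm{width}(c(h))=0$; as both widths are nonnegative they vanish, so $b(h)=bh^{u}$ and $c(h)=ch^{v}$ are Laurent monomials, and substituting turns the identity into exactly the relation (\ref{relationa}). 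I would also observe that applying $\psi$ to $xy=a(qh)$ produces the companion identity $bc\,q^{-(u+v)}h^{u+v}a(h)=a(q\alpha h^{-1})$, which is obtained from (\ref{relationa}) by the substitution $h\mapsto q^{-1}h$; hence (\ref{relationa}) is the only genuine constraint, completing part (1).

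For the converse (part (2)) I would verify that the assignment $\psi_{(\alpha,b,c,u,v)}$ respects the five defining relations (\ref{qGWArel}). The two commutation relations $xh=qhx$ and $yh=q^{-1}hy$ follow from routine $q$-power bookkeeping when moving $h^{\pm 1}$ past $x$ and $y$; the relations $yx=a(h)$ and $xy=a(qh)$ are precisely (\ref{relationa}) and its $h\mapsto q^{-1}h$ avatar; and $h^{\pm 1}h^{\mp 1}=1$ holds since $\psi(h)=\alpha h^{-1}$ is invertible. Uniqueness is immediate, an endomorphism being determined by its values on the generators.

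The only step that is not purely formal is the reduction of $b(h)$ and $c(h)$ to monomials, which I expect to be the main obstacle and which I would handle through the additivity of degree and valuation on $\mathbb{K}[h^{\pm1}]$; the remaining care lies in checking that the two identities coming from $xy=a(qh)$ and $yx=a(h)$ really do collapse to the single relation (\ref{relationa}). Everything else reuses the grading-and-$q$-power machinery already established for positive type endomorphisms.
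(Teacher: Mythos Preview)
Your argument is correct and follows the paper's proof closely in the grading step and in the verification of part~(2). The one genuine divergence is how you force $b(h)$ and $c(h)$ to be Laurent monomials. You apply $\psi$ directly to $yx=a(h)$, obtain $b(h)c(h)a(qh)=a(\alpha h^{-1})$, and compare widths (degree minus valuation) on both sides; since $a(qh)$ and $a(\alpha h^{-1})$ have the same width $d-e$, the widths of $b(h)$ and $c(h)$ must vanish. The paper instead computes $\psi^2(x)=b(h)\,c(q^{-1}\alpha h^{-1})\,x$ and invokes the already-established classification of positive type endomorphisms (Proposition~\ref{propclass1}) to conclude that this coefficient is a monomial, hence a unit, whence both factors are units. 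Your route is more self-contained and avoids appealing to the earlier proposition; the paper's route is a pleasant reuse of prior work and dovetails with Proposition~\ref{Propneg2}. One small omission: before extracting $j=1$ and $k=-1$ from the constraint $q^{-jk}=q$, you should note explicitly that $\psi(x)\neq 0$ (otherwise the constraint is vacuous); this follows, as in the paper, from $\psi(y)\psi(x)=a(\alpha h^{-j})\neq 0$.
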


\begin{proof}
The proof is similar to that of Proposition \ref{propclass1}, however, we include it for the convenience of the reader. 
\begin{enumerate}
\item By assumption $\psi(h)=\alpha h^j$ for $\alpha \in \mathbb{K}^*$ and $j\in \mathbb{Z}_{<0}$. Applying $\psi$ to the relation $xh=qhx$ we get
\begin{equation}\label{eqpq1}
\psi(x)\alpha h^j=q\alpha h^j \psi(x).
\end{equation}
Using the $\mathbb{Z}$-graded structure of $A(a(h),q)$ (see Equation (\ref{grading})), we can write 
$$\psi(x)=\sum_{k\in \mathbb{Z}} w_k,$$
where $w_k\in A_k$ is the component of degree $k$ of $\psi(x)$.  Substituting for $\psi(x)$ in Equation (\ref{eqpq1}), we get
\begin{equation*}
h^{-j}\displaystyle\sum_{k\in \mathbb{Z}} w_k h^j=q \displaystyle\sum_{k\in \mathbb{Z}} w_k.
\end{equation*}
This implies that $q^{jk} w_k=q w_k$ for all $k\in \mathbb{Z}$. By applying $\psi$ to the relation $yx=a$ and comparing degrees in $h$ we see $\psi(x)$ is non-zero. Therefore there is at least one $k$ such that $w_k\neq 0$ and since $q$ is not a root of unity and $j<0$, $q^{jk}=q$ implies that $j=-1$ and $k=-1$ (for all $k$ such that $w_k \neq 0$). Repeating this argument on the relation $yh=q^{-1}hy$ gives us the following action of $\psi$ on the generators of $A(a(h),q)$:
\begin{equation*}
\psi(h)=\alpha h^{-1}, ~~\psi(x)=y c(h), ~~ \psi(y)=b(h) x,
\end{equation*}
for Laurent polynomials $b(h)$ and $c(h)$. Applying $\psi$ twice to $x$ gives us $\psi^2(x)=b(h)c(q^{-1}\alpha h^{-1})x$. Since $\psi^2$ is a positive type endomorphism the coefficient of $x$ must be a Laurent monomial by Proposition \ref{propclass1}. Hence, it follows that $b(h)$ and $c(h)$ are Laurent monomials say $bh^v$ and $ch^u$ respectively, where $b,c\in \mathbb{K}^*$ and $v,u\in \mathbb{Z}$. Relation (\ref{relationa})  follows from applying $\psi$ to the relation $yx=a(h)$.
\item It suffices to show that $\psi_{(\alpha,b,c,u,v)}$ is consistent on the defining relations of $A(a(h),q)$. The first two relations follow by the standard calculations
\begin{equation*}
\psi_{(\alpha,b,c,u,v)}(x) \psi_{(\alpha,b,c,u,v)}(h)=ych^v \alpha h^{-1}=q \alpha h^{-1} y c h^v=q \psi_{(\alpha,b,c,u,v)}(h) \psi_{(\alpha,b,c,u,v)}(x)
\end{equation*}
and
\begin{equation*}
\psi_{(\alpha,b,c,u,v)}(y) \psi_{(\alpha,b,c,u,v)}(h)=bh^u x \alpha h^{-1}=b h^u \alpha q^{-1} h^{-1} x=q^{-1}\psi_{(\alpha,b,c,u,v)}(h)\psi_{(\alpha,b,c,u,v)}(y).
\end{equation*}
Next we have 
$$\psi_{(\alpha,b,c,u,v)}(y)\psi_{(\alpha,b,c,u,v)}(x) =bch^{u+v}a(qh)=a(\alpha h^{-1})=\psi_{(\alpha,b,c,u,v)}(a),$$ 
as desired. Finally we have
\begin{equation*}
\psi_{(\alpha,b,c,u,v)}(x)\psi_{(\alpha,b,c,u,v)}(y)=h^{v+u}q^{-v-u}bc a(h).
\end{equation*}
So it follows from Equation (\ref{relationa}) that 
\begin{equation*}
\psi_{(\alpha,b,c,u,v)}(x)\psi_{(\alpha,b,c,u,v)}(y)=a(\alpha qh^{-1})=
\psi_{(\alpha,b,c,u,v)}(a(qh)).
\end{equation*}
\end{enumerate}
\end{proof}
\noindent

 Proposition \ref{propneg} shows that there is a bijection between the set 
\begin{displaymath}
J:=\{(\alpha,b,c,u,v)~|~ \alpha,b,c\in \mathbb{K}^*,~u,v \in \mathbb{Z} \mbox{ satisfying Equation }(\ref{relationa}) \}
\end{displaymath} 
and the set $\mathrm{End}_-$ of negative type endomorphisms. This bijection sends $(\alpha,b,c,u,v)\in J$ to $\psi_{(\alpha,b,c,u,v)}$.

In order to better understand the structure of negative type endomorphisms, we now investigate the composite map of two negative type endomorphisms. It is easy to check that the composite map of two negative type endomorphisms is a positive type endomorphism. More precisely, let $(\alpha,b,c,u,v),(\alpha',b',c',u',v') \in J$. Then one can easily check that
\begin{equation}\label{compositionnegative}
\psi_{(\alpha,b,c,u,v)} \circ \psi_{(\alpha',b',c',u',v')}= \psi_{(\alpha' \alpha^{-1}, u-v',bc'\alpha^{v'}q^{-v'})}.
\end{equation}
Note that this implies in particular that $\alpha' \alpha^{-1}$ is a $t^{\mathrm{th}}$ root of unity.

\begin{corollary}\label{corog}
If  $\alpha,b,c \in \mathbb{K}^*$ and $u, v\in \mathbb{Z}$ satisfy Equation (\ref{relationa}), then we also have the relation $bc=b^{-1}c^{-1}\alpha^{-u-v}q^{u+v}$.
\end{corollary}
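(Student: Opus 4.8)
The plan is to exploit the symmetry hidden in the functional equation (\ref{relationa}) relating $a(qh)$ and $a(\alpha h^{-1})$. The key observation is that both a $q$-shift and the inversion $h\mapsto h^{-1}$ are already encoded in (\ref{relationa}), so composing the equation with a suitable involution of $\mathbb{K}[h^{\pm 1}]$ produces a second relation which can then be fed back into the original one. This turns a single relation into a closed constraint on $bc$.

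Concretely, first I would introduce the $\mathbb{K}$-algebra automorphism $\sigma$ of $\mathbb{K}[h^{\pm 1}]$ defined by $\sigma(h)=\alpha q^{-1}h^{-1}$; a one-line check shows $\sigma$ is an involution, since $\sigma^2(h)=h$. Applying $\sigma$ to both sides of (\ref{relationa}) and using the identities $q\sigma(h)=\alpha h^{-1}$ and $\alpha\,\sigma(h)^{-1}=qh$, the equation $bc\,h^{u+v}a(qh)=a(\alpha h^{-1})$ is transformed into
\[
bc\,\alpha^{u+v}q^{-(u+v)}h^{-(u+v)}a(\alpha h^{-1})=a(qh).
\]

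Next I would substitute the original relation (\ref{relationa}), written in the form $a(\alpha h^{-1})=bc\,h^{u+v}a(qh)$, into this displayed identity. The powers of $h$ cancel, leaving $(bc)^2\alpha^{u+v}q^{-(u+v)}a(qh)=a(qh)$. Since $a(h)$ is a nonzero Laurent polynomial, so is $a(qh)$, hence it may be cancelled to give $(bc)^2=\alpha^{-(u+v)}q^{u+v}$; dividing through by $bc$ yields exactly $bc=b^{-1}c^{-1}\alpha^{-u-v}q^{u+v}$, as claimed.

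The computation is essentially mechanical once the right map is chosen, so the only genuine difficulty is spotting the involution $\sigma(h)=\alpha q^{-1}h^{-1}$ that interchanges the two sides of (\ref{relationa}). I expect this to be the one nontrivial step; everything after it is bookkeeping of exponents. As a cross-check, the same conclusion can be reached by writing $a(h)=\sum_j a_{i_j}h^{i_j}$ and comparing coefficients in (\ref{relationa}): this forces the exponent set to be stable under $i\mapsto -i-(u+v)$, and multiplying the two coefficient identities attached to a pair $\{i_j,-i_j-(u+v)\}$ gives $(bc)^2 q^{i_j+i_{j'}}=\alpha^{i_j+i_{j'}}$ with $i_j+i_{j'}=-(u+v)$, recovering the same relation.
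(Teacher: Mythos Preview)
Your proof is correct, but it follows a different route from the paper's. The paper argues structurally: by Equation~(\ref{compositionnegative}) the square $\psi_{(\alpha,b,c,u,v)}^2$ is the positive type endomorphism $\psi_{(1,u-v,bc\alpha^v q^{-v})}$, and Proposition~\ref{propclass1} forces the coefficient of $y$ in a positive type endomorphism $\varphi(y)=h^{-n}c'y$ to equal $\alpha'^{d}b'^{-1}$; reading this off for $\psi^2$ yields the stated relation on $bc$. Your argument instead stays entirely inside $\mathbb{K}[h^{\pm1}]$: the involution $\sigma(h)=\alpha q^{-1}h^{-1}$ swaps the two sides of (\ref{relationa}), so applying $\sigma$ and re-substituting gives $(bc)^2\alpha^{u+v}q^{-(u+v)}a(qh)=a(qh)$, and cancelling the nonzero Laurent polynomial $a(qh)$ finishes. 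The paper's approach explains \emph{why} the identity holds (it is exactly the compatibility condition for $\psi^2$ to land in $\mathrm{End}_+$), whereas yours is self-contained and does not rely on the classification of positive type endomorphisms or the composition formula; it would establish the corollary even if those earlier results were unavailable. Your coefficient-matching cross-check is also valid and is essentially the same involution argument written termwise.
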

\begin{proof}
From Proposition \ref{propclass1}  we see that for a positive type endomorphism defined by:
$$\varphi(h)=\alpha' h, ~\varphi(x)=b' h^n x \mbox{ and } \varphi(y)=h^{-n}c'y$$ 
we have the relation $c'=\alpha'^d b'^{-1}$ (this relation is a consequence of Proposition \ref{propclass1}). If we apply this relation to the positive type endomorphism $\psi_{(\alpha,b,c,u,v)} ^2=\psi_{(1,u-v,bc\alpha^v q^{-v})}$, then we obtain the result.
\end{proof}

\noindent We now describe explicitly the inverse of a negative type endomorphism (which is an automorphism by Proposition \ref{Propneg2}).  Recall that $\psi_{(1,0,1)}=\mathrm{id}$. Thus by Corollary \ref{corog} and Equation (\ref{compositionnegative}) we have the following.  
  
\begin{proposition}
If  $(\alpha,b,c,u,v)\in J$, then  $(\alpha,q^vc^{-1}\alpha^{-v},q^ub^{-1}\alpha^{-u},v,u)\in J$ and 
$$\psi_{(\alpha,b,c,u,v)}^{-1}= \psi_{(\alpha,q^vc^{-1}\alpha^{-v},q^ub^{-1}\alpha^{-u},v,u)}.$$
\end{proposition}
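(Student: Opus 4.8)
```latex
The plan is to verify the claimed inverse directly using the composition formula for negative type endomorphisms, Equation (\ref{compositionnegative}), together with the relation from Corollary \ref{corog}. First I would observe that the composite of two negative type endomorphisms is a positive type endomorphism, so I cannot simply invert within $\mathrm{End}_-$; instead the inverse of $\psi_{(\alpha,b,c,u,v)}$ must itself be a negative type endomorphism whose composite with $\psi_{(\alpha,b,c,u,v)}$ yields the identity $\psi_{(1,0,1)}$. Thus the strategy is to posit the candidate $(\alpha',b',c',u',v') := (\alpha,q^vc^{-1}\alpha^{-v},q^ub^{-1}\alpha^{-u},v,u)$ and compute $\psi_{(\alpha,b,c,u,v)} \circ \psi_{(\alpha',b',c',u',v')}$ via Equation (\ref{compositionnegative}).

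The first task is to confirm that the proposed tuple actually lies in $J$, i.e.\ that it satisfies Equation (\ref{relationa}). I would substitute $\alpha' = \alpha$, $b' = q^v c^{-1}\alpha^{-v}$, $c' = q^u b^{-1}\alpha^{-u}$, $u'=v$, $v'=u$ into the relation $b'c'h^{u'+v'}a(qh) = a(\alpha h^{-1})$ and check that it reduces to the original relation (\ref{relationa}) for $(\alpha,b,c,u,v)$, possibly after invoking Corollary \ref{corog}. Here the key computation is that $b'c' = q^{u+v}(bc)^{-1}\alpha^{-u-v}$, which by Corollary \ref{corog} equals $bc$; this makes $(\alpha',b',c',u',v')$ satisfy the same scalar constraint as the original tuple, so membership in $J$ follows.

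The second and main task is to apply the composition formula (\ref{compositionnegative}) to show the composite equals $\psi_{(1,0,1)} = \mathrm{id}$. Writing the outer map as $\psi_{(\alpha,b,c,u,v)}$ and the inner map as $\psi_{(\alpha',b',c',u',v')}$, the formula produces a positive type endomorphism $\psi_{(\alpha'\alpha^{-1},\,u-v',\,bc'\alpha^{v'}q^{-v'})}$. With $\alpha'=\alpha$ the first coordinate is $1$; with $v'=u$ the second coordinate is $u-u=0$; and the third coordinate becomes $b \cdot q^u b^{-1}\alpha^{-u}\cdot \alpha^{u} q^{-u} = 1$. Hence the composite is $\psi_{(1,0,1)} = \mathrm{id}$, which is exactly what is required.

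The step I expect to be the main obstacle is the bookkeeping around Corollary \ref{corog}: one must be careful that the relation $bc = b^{-1}c^{-1}\alpha^{-u-v}q^{u+v}$ is applied in the correct direction, and that the exponents of $\alpha$ and $q$ track consistently through both the verification of membership in $J$ and the composition computation. Since composition of negative type endomorphisms is not commutative in general, I would also check the reverse composite $\psi_{(\alpha',b',c',u',v')} \circ \psi_{(\alpha,b,c,u,v)}$ to confirm it likewise yields the identity, so that the proposed map is genuinely a two-sided inverse rather than merely a one-sided one.
```
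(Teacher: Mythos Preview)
Your proposal is correct and follows essentially the same approach as the paper, which simply invokes Corollary \ref{corog} and Equation (\ref{compositionnegative}) to conclude; you have merely spelled out the bookkeeping in detail. One minor remark: checking the reverse composite is unnecessary, since Proposition \ref{Propneg2} already establishes that $\psi_{(\alpha,b,c,u,v)}$ is an automorphism, so a one-sided inverse is automatically two-sided.
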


\section{Endomorphisms and automorphisms of quantum generalized Weyl algebras}

As every endomorphism of $A=A(a(h),q)$ is of positive, zero or negative type, we deduce from Propositions \ref{propclass1}, \ref{propex}, \ref{Propneg2} and \ref{propneg},  and Corollaries \ref{endaut+} and \ref{endaut0} the following description for the set $\mathrm{End}(A)$ of all endomorphisms of $A$ and $\mathrm{Aut}(A)$ of all automorphisms of $A$. 

\begin{theorem}\label{thm:main}
\begin{align*}
\mathrm{End}(A)  = \{\psi_{(\alpha,n,b)} ~|~ (\alpha,n,b) \in I\} &\biguplus 
 \{\psi_{\alpha} ~|~  \alpha  \in \mathbb{K}^* ~\mathrm{and}~a(\alpha )=a(q \alpha)=0\}   \\
&\biguplus  \{\psi_{(\alpha,b,c,u,v)} ~|~ (\alpha,b,c,u,v) \in J \}. 
 \end{align*}
 and  
 \begin{eqnarray*}
\mathrm{Aut}(A) & = & \{\psi_{(\alpha,n,b)} ~|~ (\alpha,n,b) \in I\} \biguplus  \{\psi_{(\alpha,b,c,u,v)} ~|~ (\alpha,b,c,u,v) \in J  \}
 \end{eqnarray*}
where
\begin{displaymath}
 I:=\{(\alpha,n,b)| \alpha \in U_{t}, n\in \mathbb{Z}, b\in \mathbb{K}^*\}
\end{displaymath} 
 and 
\begin{displaymath}
J:=\{(\alpha,b,c,u,v)~|~ \alpha,b,c\in \mathbb{K}^*,~u,v \in \mathbb{Z}~~\mathrm{satisfying ~ Equation~ (\ref{relationa})} \}.
\end{displaymath} 
\end{theorem}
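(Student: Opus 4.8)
The plan is to assemble Theorem \ref{thm:main} directly from the trichotomy of Section \ref{section2} together with the three classification results already obtained, so the argument is essentially one of bookkeeping rather than new computation. The essential structural input, recorded in Section \ref{section2}, is that any endomorphism $\psi$ of $A = A(a(h),q)$ preserves the group of units, which by \cite[Lemma 5.1]{Bavula1} equals the units of $\mathbb{K}[h^{\pm 1}]$; consequently $\psi(h) = \alpha h^i$ for some $\alpha \in \mathbb{K}^*$ and $i \in \mathbb{Z}$. The integer $i$ therefore partitions $\mathrm{End}(A)$ into the three \emph{pairwise disjoint} families of positive ($i>0$), zero ($i=0$) and negative ($i<0$) type endomorphisms; disjointness is immediate, since the exponent of $h$ in $\psi(h)$ is an invariant of $\psi$. (Propositions \ref{propclass1} and \ref{propneg} further sharpen this to $i=1$ and $i=-1$ respectively, but that refinement is not needed for the partition.)

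With this partition in hand, I would invoke the explicit descriptions of each family. Proposition \ref{propclass1} furnishes a bijection between the index set $I$ and the positive type endomorphisms $\psi_{(\alpha,n,b)}$; Proposition \ref{propex} identifies the zero type endomorphisms as exactly the maps $\psi_{\alpha}$ for which both $\alpha$ and $q\alpha$ are roots of $a(h)$; and Proposition \ref{propneg} gives a bijection between $J$ and the negative type endomorphisms $\psi_{(\alpha,b,c,u,v)}$. Forming the disjoint union of these three descriptions yields the stated formula for $\mathrm{End}(A)$.

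To obtain the description of $\mathrm{Aut}(A)$, I would then separate the invertible endomorphisms from the rest. Corollary \ref{endaut+} and Proposition \ref{Propneg2} guarantee that every positive and every negative type endomorphism is an automorphism, so both the $I$-family and the $J$-family lie in $\mathrm{Aut}(A)$. On the other hand, a zero type endomorphism sends both $x$ and $y$ to $0$ and so fails to be injective; this is precisely the content of Corollary \ref{endaut0}(1), which states that $A(a(h),q)$ has no zero type monomorphisms. Hence no zero type endomorphism is an automorphism, and $\mathrm{Aut}(A)$ consists exactly of the positive and negative type families, as claimed. Since every step is a citation of an already-established result, I anticipate no substantial obstacle; the only point meriting care is the verification that the three families have trivial pairwise intersection, which follows at once from the $h$-exponent invariant.
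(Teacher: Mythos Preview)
Your proposal is correct and mirrors the paper's own argument exactly: the paper simply states that the theorem is deduced from Propositions \ref{propclass1}, \ref{propex}, \ref{Propneg2} and \ref{propneg} together with Corollaries \ref{endaut+} and \ref{endaut0}, via the trichotomy of Section \ref{section2}. Your write-up is in fact more explicit than the paper's, which offers no further detail.
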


Theorem \ref{aqthe2} is a consequence of Theorem \ref{thm:main}, and Theorem \ref{aqthe} is an easy consequence of Theorem \ref{aqthe2} as explained in the introduction.

Finally, let us denote by $\mathrm{End}_-$ the set of negative type endomorphisms. Assume that $\mathrm{End}_-$ is not empty. That is we assume that there exist  $\alpha,b,c \in \mathbb{K}^*$ and $u, v\in \mathbb{Z}$ satisfying Equation (\ref{relationa}). Fix a negative endomorphism, say $\theta$. Then 
$$ \mathrm{End}_-=\{ \psi \theta ~|~ \psi \in \mathrm{End}_+\} = \{ \theta \psi ~|~ \psi \in \mathrm{End}_+\}.$$
We are now ready to describe the automorphism group of the quantum generalized Weyl algebra $A(a(h),q)$.

\begin{theorem}\label{propaut}
Let $A=A(a(h),q)$ be a quantum generalized Weyl algebra. Recall that $\mathrm{End}_+= \{\psi_{(\alpha,n,b)} ~|~ (\alpha,n,b) \in I\}$.
\begin{enumerate}
\item If $A$ has no negative type endomorphisms, then $\mathrm{Aut(A)}=\mathrm{End}_+$.
\item Assume that $A$ has at least one negative endomorphism, say $\theta$. Then we have the following short exact sequence
\begin{equation*}
1\longrightarrow   \mathrm{End}_+     \longrightarrow \mathrm{Aut(A)}  \longrightarrow \mathbb{Z}/2 \longrightarrow 1.
\end{equation*}
\end{enumerate}
\end{theorem}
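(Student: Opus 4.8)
The plan is to prove Theorem \ref{propaut} by exploiting the trichotomy already established, together with the composition rules (\ref{composition}) and (\ref{compositionnegative}). The key structural observation is that $\mathrm{Aut}(A)$ decomposes as a disjoint union $\mathrm{End}_+ \biguplus \mathrm{End}_-$: indeed, by Theorem \ref{thm:main} every automorphism is either of positive or of negative type, since zero type endomorphisms are never injective by Corollary \ref{endaut0}, and positive and negative types are disjoint by definition (the exponent of $h$ under $\psi$ is respectively positive or negative). Thus the two parts of the theorem correspond exactly to whether $\mathrm{End}_-$ is empty or not.

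For part (1), if $A$ has no negative type endomorphisms, then every automorphism is of positive type, so $\mathrm{Aut}(A)=\mathrm{End}_+$ immediately. This is already recorded as the content of the preceding Proposition (the one giving $\mathrm{End}_+\cong(\mathrm{U}_t\times\mathbb{K}^*)\rtimes\mathbb{Z}$), which established that $\mathrm{End}_+$ is a subgroup of $\mathrm{Aut}(A)$; combined with the emptiness of $\mathrm{End}_-$ and the trichotomy, equality follows with no further work.

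For part (2), I would define the map $\mathrm{Aut}(A)\to\mathbb{Z}/2$ sending every positive type automorphism to $0$ and every negative type automorphism to $1$. The crucial point is that this is a group homomorphism: by Proposition \ref{propclass1} the composite of two positive type endomorphisms is positive type, by (\ref{compositionnegative}) the composite of two negative type endomorphisms is positive type, and one checks easily (from the definitions of $\psi(h)$ in each case, the exponent of $h$ multiplying under composition) that the composite of a positive and a negative type endomorphism is negative type. Hence the ``type'' is additive modulo $2$, so the map is a homomorphism. Its kernel is exactly $\mathrm{End}_+$, giving the exactness on the left; and it is surjective precisely because we have assumed a negative endomorphism $\theta$ exists, which maps to $1$. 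The displayed identity $\mathrm{End}_-=\{\psi\theta \mid \psi\in\mathrm{End}_+\}=\{\theta\psi\mid\psi\in\mathrm{End}_+\}$, noted just before the theorem, confirms that $\mathrm{End}_-$ is a single coset and hence that the cokernel is exactly $\mathbb{Z}/2$.

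The only genuinely delicate step is verifying that the type map is well defined and multiplicative on the nose, i.e. that composing across types always lands in the expected type. This reduces to tracking the exponent of $h$: if $\psi(h)=\alpha h^{\pm 1}$ and $\psi'(h)=\alpha' h^{\pm 1}$, then $(\psi\circ\psi')(h)$ has $h$-exponent equal to the product of the two signs, which is exactly the parity rule. Since Proposition \ref{Propneg2} already guarantees every negative type endomorphism is an automorphism (via its square being positive type), no injectivity or surjectivity of individual maps needs re-checking here; the entire argument is purely a bookkeeping of the $\mathbb{Z}/2$-grading by type, and I expect it to be short once the homomorphism property is spelled out.
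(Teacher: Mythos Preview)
Your proposal is correct and matches the paper's approach. In fact the paper gives no explicit proof of Theorem~\ref{propaut} at all: it is stated as an immediate consequence of Theorem~\ref{thm:main}, the composition formulas (\ref{composition}) and (\ref{compositionnegative}), and the coset description $\mathrm{End}_-=\{\psi\theta\mid\psi\in\mathrm{End}_+\}=\{\theta\psi\mid\psi\in\mathrm{End}_+\}$ displayed just before the theorem; your argument simply spells out the details the paper leaves implicit, and does so correctly.
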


We note that the existence of an involution for $A(a(h),q)$ heavily depends on $a(h)$. For instance, when $d+e$ is odd, one can easily show that $A(a(h),q)$ has no involutions. In particular, the algebras $A_{\alpha,q}$ defined in the Introduction have no involutions, but have many negative type endomorphisms. On the other hand, the minimal primitive factors of the quantized enveloping algebra of $U_q(\mathfrak{sl}_2)$ always have involutions, therefore for these algebras the short exact sequence in Theorem \ref{propaut} splits.

\ \\ \hspace{1cm}
\begin{minipage}[c]{\linewidth}
~\\
\noindent 
A.P. Kitchin and S. Launois\\ 
School of Mathematics, Statistics \& Actuarial Science,\\ 
University of Kent, \\
Canterbury, Kent CT2 7NF, United Kingdom\\
E-mails: {\tt ak562@kent.ac.uk} and {\tt S.Launois@kent.ac.uk}

\end{minipage}


\begin{thebibliography}{9}
\bibitem{Backelin}
 E. Backelin, {\em Endomorphisms of quantized Weyl algebras}, Lett. Math. Phys. 97 (2011), no. 3, 317-338.
\bibitem{Belov}
 A. Belov-Kanel and M. Kontsevich, {\em The Jacobian conjecture is stably equivalent to the Dixmier conjecture}, Mosc. Math. J. 7 (2007), no. 2, 209-218.
\bibitem{Dixmier}
 J. Dixmier, {\em Sur les alg\`ebres de Weyl}, Bull. Soc. Math. France 96 (1968), 209-�242.
\bibitem{Dixmier2}
J. Dixmier, Alg\`ebres enveloppantes, Gauthier-Villars, Paris, 1974.
\bibitem{Bavula2}
V.V. Bavula, {\em Generalized Weyl algebras and their representations}, Algebra i Analiz 4 (1992), no. 1, 75-97; translation in St. Petersburg Math. J. 4 (1993), no. 1, 71-92
\bibitem{Bavula1}
V.V. Bavula and D.A. Jordan, {\em Isomorphism problems and groups of automorphisms for generalized Weyl algebras}, Trans. Amer. Math. Soc. 353 (2001), no. 2, 769-�794.
\bibitem{Keller}
O.-H. Keller, {\em Ganze Cremona-Transformationen}, Monatsh. Math. Phys. 47 (1939), no. 1, 299-306.
\bibitem{Launois}
 S. Launois, {\em Primitive ideals and automorphism group of $U^+_q(B_2)$}, J. Algebra Appl. 6 (2007), no. 1, 21-47.
\bibitem{Richard}
 L. Richard, {\em Sur les endomorphismes des tores quantiques}, Comm. Algebra 30 (2002), no. 11, 5283-5306.
\bibitem{RichardandSolotar}
 L. Richard and A. Solotar, {\em Isomorphisms between quantum generalized Weyl algebras}, J. Algebra Appl. 5 (2006), no. 3, 271-285.
\bibitem{Tsuchimoto}
 Y. Tsuchimoto, {\em Preliminaries on Dixmier conjecture}, Mem. Fac. Sci. Kochi Univ. Ser. A Math. 24 (2003), 43-59.
\bibitem{Vivas}
M. Suarez-Alvarez and Q. Vivas, {\em Automorphisms and isomorphism of quantum generalized Weyl algebras}, ArXiv:1206.4417.
\end{thebibliography}
\end{document}